\newtheorem{theorem}{Theorem}
\newtheorem{corollary}[theorem]{Corollary}
\newtheorem{proposition}[theorem]{Proposition}
\theoremstyle{definition}
\newtheorem{definition}[theorem]{Definition}
\title{Nearly tight weighted $2$-designs \\ in complex projective spaces of every dimension}
\author{John Jasper\thanks{Department of Mathematics and Statistics, Air Force Institute of Technology, Wright-Patterson AFB, Ohio, USA} \and Dustin G.\ Mixon\thanks{Department of Mathematics, The Ohio State University, Columbus, Ohio, USA} \thanks{Translational Data Analytics Institute, The Ohio State University, Columbus, Ohio, USA}}
\date{}
\begin{document}
\maketitle

\begin{abstract}
We use dense Sidon sets to construct small weighted projective $2$-designs.
This represents quantitative progress on Zauner's conjecture.
\end{abstract}

\section{Introduction}

In his PhD thesis~\cite{Zauner:99}, Zauner conjectured that for every $d\in\mathbb{N}$, there exists an arrangement of $d^2$ distinct points in $\mathbb{CP}^{d-1}$ with the property that every pair of points has the same Fubini--Study distance.
Such arrangements are known as \textit{symmetric informationally complete positive operator--valued measures} (or SICs) in quantum information theory, where they find use in quantum state tomography.

Over the last decade, there have been three main approaches to make progress on Zauner's conjecture.
First, computational investigations have produced numerical approximations of putative SICs (as well as some exact SICs) in finitely many dimensions~\cite{ScottG:10}.
Further analysis in~\cite{ApplebyFMY:17} then established that the coordinates of each of these exact SICs reside in an abelian extention of a particular real quadratic extension of $\mathbb{Q}$.
This discovery prompted a second approach to Zauner's conjecture, which leverages conjectures in algebraic number theory (such as those due to Stark) to find additional exact SICs and even a conditional proof of Zauner's conjecture~\cite{Kopp:21,ApplebyBGHM:22,BengtssonGM:24,ApplebyFK:25}.

As a third approach, \cite{PandeyEtal:20} reformulated Zauner's conjecture in terms of the entanglement breaking rank $n(d)$ of a certain quantum channel over $\mathbb{C}^{d\times d}$.

\begin{proposition}
\label{prop.bound of zauner}
For each $d\in\mathbb{N}$, it holds that $n(d)\geq d^2$, with equality precisely when there exists a SIC in $\mathbb{CP}^{d-1}$.
\end{proposition}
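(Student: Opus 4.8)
The plan is to make the reformulation from \cite{PandeyEtal:20} explicit and then argue on the level of Choi matrices. Recall that $n(d)$ is the entanglement-breaking rank of the channel $\Phi\colon\mathbb{C}^{d\times d}\to\mathbb{C}^{d\times d}$, $\Phi(X)=\tfrac{1}{d+1}(\operatorname{tr}(X)\,I_d+X)$, that is, the least $n$ for which $\Phi$ admits a Kraus representation $\Phi(X)=\sum_{j=1}^{n}K_jXK_j^{*}$ with every $K_j$ of rank one (equivalently, the least number of outcomes in a measure-and-prepare realization of $\Phi$). Set $|\omega\rangle=\sum_{i=1}^{d}|i\rangle\otimes|i\rangle$. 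I would first compute the Choi matrix $C_\Phi=(\operatorname{id}\otimes\Phi)(|\omega\rangle\langle\omega|)=\tfrac{1}{d+1}(I_{d^2}+|\omega\rangle\langle\omega|)$ and note that it has eigenvalue $1$ on $|\omega\rangle$ and $\tfrac{1}{d+1}$ on $|\omega\rangle^{\perp}$; hence $C_\Phi$ is positive definite of rank $d^2$, with inverse $C_\Phi^{-1}=(d+1)I_{d^2}-|\omega\rangle\langle\omega|$. Under the Choi correspondence, a length-$n$ rank-one Kraus representation of $\Phi$ is exactly a decomposition $C_\Phi=\sum_{j=1}^{n}|c_j\rangle\langle c_j|$ in which every $|c_j\rangle$ is a product vector; since a sum of $n$ rank-one positive semidefinite matrices has rank at most $n$, any such decomposition has $n\geq\operatorname{rank}(C_\Phi)=d^2$. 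This gives $n(d)\geq d^2$.

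For the reverse inequality when a SIC $\{|\phi_j\rangle\}_{j=1}^{d^2}$ exists, I would take $K_j=\tfrac{1}{\sqrt d}|\phi_j\rangle\langle\phi_j|$. The tight-frame identity $\sum_j|\phi_j\rangle\langle\phi_j|=d\,I_d$ yields $\sum_jK_j^{*}K_j=I_d$, and the standard second-moment (weighted $2$-design) identity for a SIC yields $\sum_jK_jXK_j^{*}=\Phi(X)$; this is a rank-one Kraus representation of length $d^2$, so $n(d)\leq d^2$ and thus $n(d)=d^2$.

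The substantive direction is to show that $n(d)=d^2$ forces a SIC. Fix a length-$d^2$ rank-one Kraus representation $\Phi(X)=\sum_{j=1}^{d^2}K_jXK_j^{*}$ with $K_j=|v_j\rangle\langle w_j|$ (all $K_j\neq 0$, since $n(d)=d^2$ is minimal). Vectorizing, $C_\Phi=\sum_{j=1}^{d^2}\mu_j\,(|a_j\rangle\langle a_j|)\otimes(|b_j\rangle\langle b_j|)$, where $|a_j\rangle$ is the entrywise complex conjugate of $|w_j\rangle/\|w_j\|$, where $|b_j\rangle=|v_j\rangle/\|v_j\|$, and where $\mu_j=\|v_j\|^2\|w_j\|^2>0$. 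Because $d^2$ rank-one terms sum to the full-rank matrix $C_\Phi$, the vectors $|a_j\rangle\otimes|b_j\rangle$ form a basis of $\mathbb{C}^{d^2}$, whence the dual-basis relation $\sqrt{\mu_j\mu_k}\,\langle a_j\otimes b_j|\,C_\Phi^{-1}\,|a_k\otimes b_k\rangle=\delta_{jk}$. Its diagonal entries, using $C_\Phi^{-1}=(d+1)I_{d^2}-|\omega\rangle\langle\omega|$, give $\mu_j=(d+1-|\beta_j|^2)^{-1}$ with $\beta_j=\langle\omega|a_j\otimes b_j\rangle$, and $|\beta_j|\leq 1$ by Cauchy--Schwarz, so $\mu_j\leq\tfrac1d$. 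On the other hand $\sum_j\mu_j=\operatorname{tr}(C_\Phi)=d$, so $d^2$ numbers in $(0,\tfrac1d]$ summing to $d$ must all equal $\tfrac1d$; hence $|\beta_j|=1$ for every $j$. Equality in Cauchy--Schwarz then forces $|b_j\rangle$ to equal the entrywise conjugate of $|a_j\rangle$ up to a unimodular scalar $\lambda_j$, so writing $|\phi_j\rangle$ for that conjugate we have $|a_j\rangle=\overline{|\phi_j\rangle}$ and $|b_j\rangle=\lambda_j|\phi_j\rangle$. Substituting into the off-diagonal relation $(d+1)\langle a_j|a_k\rangle\langle b_j|b_k\rangle=\overline{\beta_j}\beta_k$ collapses it to $(d+1)|\langle\phi_j|\phi_k\rangle|^2=1$ for $j\neq k$, while partial-tracing $C_\Phi$ over the first factor gives $\sum_j|\phi_j\rangle\langle\phi_j|=d\,I_d$. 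Thus $\{|\phi_j\rangle\}_{j=1}^{d^2}$ are $d^2$ pairwise distinct points of $\mathbb{CP}^{d-1}$ at constant Fubini--Study distance, i.e., a SIC.

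I expect the only real obstacle to be this last implication, and within it the rigidity step: extracting quantitative relations among the $\langle a_j|a_k\rangle$, $\langle b_j|b_k\rangle$, and $\beta_j$ from the mere fact that $d^2$ product vectors form a basis of $\mathbb{C}^{d^2}$ whose associated frame operator is the prescribed $C_\Phi$. The dual-basis computation through $C_\Phi^{-1}$ is what makes this tractable, and its payoff is the pigeonhole argument forcing $\mu_j\equiv\tfrac1d$; once that is in hand, the passage from the pairs $(|a_j\rangle,|b_j\rangle)$ to the SIC vectors $|\phi_j\rangle$ is routine bookkeeping with conjugations and phases that need only be carried out with care.
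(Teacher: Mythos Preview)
The paper does not supply its own proof of this proposition; it is quoted as the main result of \cite{PandeyEtal:20}, and the paragraph following it notes that via \cite{IversonKM:21} the statement becomes Theorem~4 of \cite{Scott:06} (the lower bound $d^2$ on the size of a weighted $2$-design, with equality characterizing SICs). So there is no in-paper argument to compare against beyond these citations.

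That said, your sketch is correct and is essentially the argument of \cite{PandeyEtal:20}. The Choi computation $C_\Phi=\tfrac{1}{d+1}(I_{d^2}+|\omega\rangle\langle\omega|)$ is right, and full rank immediately gives $n(d)\geq d^2$; the SIC-to-Kraus direction is the standard $2$-design identity. For the converse you correctly exploit that a sum of $d^2$ rank-one terms equaling a rank-$d^2$ operator forces the product vectors to be a basis, so the biorthogonality relation $\sqrt{\mu_j\mu_k}\,\langle a_j\otimes b_j|C_\Phi^{-1}|a_k\otimes b_k\rangle=\delta_{jk}$ holds. The diagonal of this, together with $|\beta_j|\leq 1$ and $\sum_j\mu_j=\operatorname{tr}(C_\Phi)=d$, is exactly the pigeonhole that pins $\mu_j\equiv\tfrac{1}{d}$ and $|\beta_j|\equiv 1$; the off-diagonal then collapses to $|\langle\phi_j|\phi_k\rangle|^2=\tfrac{1}{d+1}$, and the partial trace gives the tight-frame condition. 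All of the bookkeeping with conjugates and phases checks out. In short, your proposal is a faithful, self-contained reconstruction of the cited result rather than an alternative route.
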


Importantly, this allows for quantitative progress on Zauner's conjecture:
For each $d\in\mathbb{N}$, find an upper bound on $n(d)$.
Then the closer this upper bound is to $d^2$, the ``closer'' we are to a proof of Zauner's conjecture in dimension $d$.
Soon after the release of~\cite{PandeyEtal:20}, a follow-up paper~\cite{IversonKM:21} showed that $n(d)$ equals the size of the smallest \textit{weighted $2$-design} for $\mathbb{CP}^{d-1}$, thereby identifying Proposition~\ref{prop.bound of zauner} above with Theorem~4 in~\cite{Scott:06}.

\begin{definition}
The unit vectors $x_1,\ldots,x_n\in\mathbb{C}^d$ are said to form a \textbf{weighted $2$-design for $\mathbb{CP}^{d-1}$} if there exist weights $w_1,\ldots,w_n\geq0$ such that the weighted combination
\[
\sum_{k=1}^n w_k(x_k^{\otimes 2})(x_k^{\otimes 2})^*
\]
equals the orthogonal projection onto the subspace of symmetric tensors in $(\mathbb{C}^d)^{\otimes 2}$.
\end{definition}

The following summarizes all of the best known upper bounds on $n(d)$:

\begin{proposition}\
\label{prop.known constructions}
\begin{itemize}
\item[(a)]
$n(d)=d^2$ whenever a SIC in $\mathbb{CP}^{d-1}$ is known to exist.
\item[(b)]
$n(d)\leq kd^2 + 2d$ whenever $kd + 1$ is a prime power with $k\in\mathbb{N}$.
\item[(c)]
$n(d)\leq d^2 + 1$ whenever $d-1$ is a prime power.
\item[(d)]
$n(d)\leq d^2 + d - 1$ whenever $d$ is a prime power.
\item[(e)]
$n(d)\leq\binom{d+1}{2}^2$.
\end{itemize}
\end{proposition}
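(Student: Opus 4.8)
\medskip

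\noindent\emph{Proof strategy.}
My plan is to settle (a) at once, reduce the remaining parts to a single numerical criterion, prove (e) by a soft compactness argument, and derive (b), (c), (d) from the explicit finite-field configurations already in the literature, presented uniformly through that criterion. Part (a) needs nothing beyond Proposition~\ref{prop.bound of zauner}: when a SIC in $\mathbb{CP}^{d-1}$ is known to exist, the bound $n(d)\geq d^2$ is attained, hence $n(d)=d^2$. For (b)--(e), write $D:=\binom{d+1}{2}=\dim\mathrm{Sym}^2(\mathbb{C}^d)$ and identify the orthogonal projection onto $\mathrm{Sym}^2(\mathbb{C}^d)$ with the identity $I_W$ on $W:=\mathrm{Sym}^2(\mathbb{C}^d)$. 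Unit vectors $x_1,\dots,x_n$ with weights $w_k\geq 0$ form a weighted $2$-design exactly when $\sum_k w_k(x_k^{\otimes2})(x_k^{\otimes2})^{\ast}=I_W$; taking the trace forces $\sum_k w_k=D$, and the inequality $\big\|\sum_k w_k(x_k^{\otimes2})(x_k^{\otimes2})^{\ast}\big\|_{\mathrm{HS}}^2\geq\big(\sum_k w_k\big)^2/D$, with equality only for a scalar multiple of $I_W$, shows this is equivalent to the single identity
\[
\sum_{j,k} w_j\,w_k\,\bigl\lvert\langle x_j,x_k\rangle\bigr\rvert^4 = D .
\]
So in each remaining case it suffices to exhibit vectors and weights and evaluate one double sum of fourth powers of overlaps.

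For (e) I would construct nothing. The operator $I_W$ lies in the convex cone generated by $\{(x^{\otimes2})(x^{\otimes2})^{\ast}:\lVert x\rVert=1\}$: averaging $(x^{\otimes2})(x^{\otimes2})^{\ast}$ against the uniform measure on the unit sphere produces a $\mathrm{U}(d)$-invariant positive operator supported on $W$, hence a positive multiple of $I_W$ by Schur's lemma and the irreducibility of $W$. That cone lies in the real vector space of Hermitian operators on $W$, of real dimension $D^2$, so Carath\'eodory's theorem for conic hulls writes $I_W$ as a nonnegative combination of at most $D^2$ of the generators; the underlying vectors and weights are a weighted $2$-design, and $n(d)\leq D^2=\binom{d+1}{2}^2$.

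Parts (b), (c), (d) are the combinatorial core. Each is witnessed in prior work by an explicit configuration over a finite field --- an orbit of a Heisenberg-type or multiplicative action, together with a bounded number of orthonormal bases --- whose $2$-design property reduces, via the displayed identity, to a Gauss-sum (Weil-bound) evaluation; my plan is to collect these and verify them through the fourth-moment criterion. The cleanest instance is (d): for $d$ a prime power, the $d(d+1)=d^2+d$ vectors of a maximal family of $d+1$ mutually unbiased bases, each weighted $1/2$, satisfy the identity because the overlaps $\lvert\langle x_j,x_k\rangle\rvert^2$ take only the values $1$, $0$, $1/d$ (equal vectors; distinct vectors in a common basis; vectors in different bases), and a sharper construction available for prime-power $d$ lowers the count to $d^2+d-1$. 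Parts (c) and (b) are the analogous constructions over $\mathbb{F}_q$ in the regimes $q=d-1$ (so $d\mid q+1$) and $q=kd+1$ (so $d\mid q-1$, with $kd^2+2d=d(q+1)$): assemble a configuration of the required size and split the double sum by the value of $\lvert\langle x_j,x_k\rangle\rvert^2$, evaluating each resulting additive/multiplicative character sum.

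The main obstacle is (b). Pinning the cardinality at $kd^2+2d=d(q+1)$, rather than the $\Theta\bigl((kd)^2\bigr)$ a complete mutually unbiased family in dimension $q=kd+1$ would cost, forces one to exploit the index-$k$ subgroup of $\mathbb{F}_q^{\times}$ to fold the configuration into $\mathbb{C}^d$, and essentially all of the work lives in the bookkeeping that follows: tracking which pairs realize which overlap magnitude, confirming that the cross terms with the extra orthonormal bases cancel, and checking that the character sums evaluate to the values the identity demands. By contrast (a) and (e) are soft, and (c), (d) are the same mechanism with cleaner arithmetic; so I would write (b) out carefully and obtain the others as streamlined special cases.
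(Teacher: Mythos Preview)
Your treatment of (a) is fine, and your Carath\'eodory argument for (e) is correct and is essentially the content of the result in \cite{IversonKM:21} that the paper cites.  The paper's own proof of this proposition is pure citation: (b) is attributed to Roy--Scott~\cite{RoyS:07}, (c) and (d) to Bodmann--Haas~\cite{BodmannH:16}, and (e) to Iverson--King--Mixon~\cite{IversonKM:21}.  So your plan to re-derive everything through the fourth-moment identity
\[
\sum_{j,k} w_j w_k\,|\langle x_j,x_k\rangle|^4 = \binom{d+1}{2}
\]
is a different and more self-contained route, and for (b) your sketch (folding a configuration over $\mathbb{F}_q$, $q=kd+1$, through the index-$k$ subgroup of $\mathbb{F}_q^\times$ to land in $\mathbb{C}^d$) does point at the Roy--Scott construction.

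The gap is in (c) and especially (d).  Your MUB computation for (d) is correct and gives a weighted $2$-design of size $d(d+1)=d^2+d$, but the bound asserted is $d^2+d-1$, and your proposal just invokes an unspecified ``sharper construction.''  The constructions that actually achieve (c) and (d) are \emph{not} MUB configurations or minor modifications thereof: they are instances of the Bodmann--Haas construction that the present paper develops.  For (d) one takes $G=\mathbb{F}_{q^2}^\times$ with $q=d$ and the Bose Sidon set $S=\{x\in\mathbb{F}_{q^2}^\times:\operatorname{tr}x=0\}$ of size $q$, obtaining $|G|+|S|=(q^2-1)+q=d^2+d-1$ unit vectors in $\mathbb{C}^S\cong\mathbb{C}^d$; for (c) one takes the Singer Sidon set with $q=d-1$, obtaining $(q^2+q+1)+(q+1)=d^2+1$.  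In both cases the $|G|$ vectors are restrictions of characters of $G$ to $S$ (not MUB vectors), the remaining $|S|$ vectors are the standard basis, and the $2$-design property hinges on the Sidon condition on $S$ --- none of which your outline identifies.  Without naming these constructions (or an alternative that genuinely reaches $d^2+d-1$ and $d^2+1$), parts (c) and (d) of your proposal do not close.
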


\begin{proof}
First, (a) follows from the fact that SICs are weighted projective $2$-designs of minimal size. 
Next, (b) follows from combining Theorem~4.1 with Proposition~4.2 in~\cite{RoyS:07}.
Also, (c) and (d) follow from Corollaries~4.4 and~4.6 in~\cite{BodmannH:16}.
Finally, (e) follows from Corollary~7 in~\cite{IversonKM:21}.
\end{proof}

The weighted projective $2$-designs that imply Proposition~\ref{prop.known constructions}(c) and~(d) are instances of the same \textit{Bodmann--Haas construction}~\cite{BodmannH:16}.
In this paper, we identify a much larger class of weighted projective $2$-designs that arise from this construction, which in turn implies our main result:

\begin{theorem}
\label{thm.main}
$n(d)\leq d^2+O(d^{1.525})$.
\end{theorem}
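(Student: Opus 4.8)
The plan is to split Theorem~\ref{thm.main} into a construction step and a number-theory step. The construction step will generalize the Bodmann--Haas designs behind Proposition~\ref{prop.known constructions}(c) and~(d) so that they are driven by an arbitrary \emph{Sidon set} (equivalently $B_2^+$ set) $S\subseteq\mathbb{Z}/N\mathbb{Z}$ --- one whose pairwise sums $s+s'$ ($s,s'\in S$) are distinct, so that $|S+S|=\binom{|S|+1}{2}$ --- rather than only by a perfect difference set. The number-theory step will then exhibit, for every $d$, such a set of size exactly $d$ inside $\mathbb{Z}/N\mathbb{Z}$ with $N$ as close to $d^2$ as current prime-gap bounds allow.

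For the construction step I would show: if $N$ is odd and $\mathbb{Z}/N\mathbb{Z}$ contains a Sidon set $S$ with $|S|=d$, then $n(d)\leq N+d$. Index $\mathbb{C}^d$ by $S$, put $\omega=e^{2\pi i/N}$, and take the harmonic frame $x_k=d^{-1/2}(\omega^{sk})_{s\in S}$ for $k\in\mathbb{Z}/N\mathbb{Z}$. This frame is tight, so $\sum_k x_k^{\otimes 2}(x_k^{\otimes 2})^*$ lives on the symmetric subspace; and since the Sidon property makes the characters $c\mapsto\omega^{ck}$ with $c\in S+S$ pairwise distinct, expanding in the basis of $\mathrm{Sym}^2(\mathbb{C}^d)$ labeled by $S+S$ shows this operator is diagonal, namely $\frac{N}{d^2}(I+\Delta)$ where $\Delta$ is the orthogonal projection onto the span of the off-diagonal symmetric tensors $e_s\otimes e_{s'}+e_{s'}\otimes e_s$. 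Weighting each $x_k$ by $\frac{d^2}{2N}$ kills the $\Delta$ term and leaves the residual $\frac12 P_{\mathrm{diag}}$, where $P_{\mathrm{diag}}$ projects onto $\mathrm{span}\{e_s\otimes e_s:s\in S\}$; but $\frac12 P_{\mathrm{diag}}=\sum_{s\in S}\frac12(e_s^{\otimes 2})(e_s^{\otimes 2})^*$. Hence adjoining the $d$ standard basis vectors of $\mathbb{C}^d$, each with weight $\frac12$, turns the $N$ harmonic vectors into a weighted $2$-design of size $N+d$. For $N=d^2-d+1$ and $N=d^2-1$ this is exactly Proposition~\ref{prop.known constructions}(c) and~(d); the new point is that no perfectness is needed --- the defect is always the fixed operator $\frac12 P_{\mathrm{diag}}$, which is why the overhead stays $+d$ for every Sidon set, giving the promised larger class of Bodmann--Haas designs.

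For the number-theory step, let $p$ be the least prime with $p\geq d-1$. A Singer difference set is a perfect, hence Sidon, set of size $p+1$ in $\mathbb{Z}/(p^2+p+1)\mathbb{Z}$; deleting $p+1-d$ of its elements leaves a Sidon set of size exactly $d$ in $\mathbb{Z}/N\mathbb{Z}$ with $N=p^2+p+1$ odd. By the Baker--Harman--Pintz theorem on prime gaps, $p=d-1+O(d^{0.525})$, so $N=d^2+O(d^{1.525})$, and the construction step yields $n(d)\leq N+d=d^2+O(d^{1.525})$; the finitely many $d$ below the threshold of the prime-gap estimate are absorbed into the implied constant using Proposition~\ref{prop.known constructions}(e).

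I expect essentially all of the work to be in recognizing that the Bodmann--Haas completion depends only on the Sidon property of $S$ and always produces the same rank-$d$ defect $\frac12 P_{\mathrm{diag}}$; the deletion argument for Sidon sets and the prime-gap input are then standard. It is worth noting that the exponent $1.525=1+0.525$ is inherited verbatim from the current unconditional record on prime gaps --- under the Riemann hypothesis the same argument gives $n(d)\leq d^2+O(d^{3/2}\log d)$, and no choice of known Sidon-set construction can push the error term below order $d^{3/2}$ along this route.
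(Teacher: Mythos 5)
Your proposal is correct and follows essentially the same route as the paper: the heart of the argument is exactly Theorem~\ref{thm.Sidon characterization}, that the Bodmann--Haas completion of an arbitrary Sidon set $S\subseteq G$ always has the same rank-$|S|$ ``diagonal'' defect and hence yields a weighted $2$-design of size $|G|+|S|$, which is then combined with a dense Sidon family and the Baker--Harman--Pintz prime-gap bound. The only immaterial differences are that you feed in truncated Singer sets in $\mathbb{Z}/(p^2+p+1)\mathbb{Z}$ where the paper uses the Erd\H{o}s--Tur\'{a}n sets in $(\mathbb{F}_p,+)^2$ (both give $d^2+O(d^{1.525})$), and your hypothesis that $N$ be odd is not actually needed.
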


This is the first known general upper bound on $n(d)$ that is $o(d^4)$, let alone sharp up to lower-order terms.
In the next section, we present the Bodmann--Haas construction and show that applying it to \textit{Sidon sets} results in weighted projective $2$-designs.
Next, Section~3 reviews the densest known Sidon sets and uses them to construct weighted projective $2$-designs that are \textit{nearly tight} (meaning their size is close to the lower bound $d^2$).
We conclude in Section~4 with a discussion.

\section{The Bodmann--Haas construction}

In what follows, we describe a construction technique due to Bodmann and Haas~\cite{BodmannH:16} that was originally obtained by generalizing a particular construction of mutually unbiased bases due to Godsil and Roy~\cite{GodsilR:09}.

\begin{definition}
Fix a finite abelian group $G$.
The \textbf{Bodmann--Haas construction} is a map that receives a subset $S\subseteq G$ and returns a sequence of $|G|+|S|$ unit vectors in $\mathbb{C}^S$.
In particular, each character $\alpha\in\hat{G}$ determines the vector $x_\alpha\in\mathbb{C}^S$ defined by
\[
x_\alpha(s)
=\frac{1}{\sqrt{|S|}}\alpha(s)
\qquad
(s\in S),
\]
while each $r\in S$ determines $e_r\in\mathbb{C}^S$ defined in terms of the Kronecker delta by
\[
e_r(s)=\delta_{r,s}
\qquad
(s\in S).
\]
Overall, given $S\subseteq G$, the Bodmann--Haas construction returns $\{x_\alpha\}_{\alpha\in\hat{G}}\cup\{e_r\}_{r\in S}$.
\end{definition}

Bodmann and Haas~\cite{BodmannH:16} used this to construct projective codes that achieve equality in the \textit{orthoplex bound}, and then seemingly as an afterthought, they verified that these codes are weighted projective $2$-designs.
In this section, we find many more instances in which the Bodmann--Haas construction returns a weighted projective $2$-design.

\begin{definition}
A \textbf{Sidon set} is a subset $S$ of a finite abelian group $G$ with the property that the map $\{a,b\}\mapsto a+b$ is injective over $a,b\in S$ (allowing $a=b$).
\end{definition}

\begin{theorem}
\label{thm.Sidon characterization}
Fix a finite abelian group $G$.
Then for any Sidon set $S\subseteq G$, the Bodmann--Haas construction applied to $S$ returns a weighted $2$-design for $\mathbb{CP}^{|S|-1}$.
\end{theorem}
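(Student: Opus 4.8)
The plan is to verify directly that the weighted combination of rank-one projections built from the Bodmann--Haas vectors equals the projection $P_{\mathrm{sym}}$ onto the symmetric subspace of $(\mathbb{C}^S)^{\otimes 2}$. First I would recall the standard criterion: a collection of unit vectors $\{x_k\}$ with weights $\{w_k\}$ summing appropriately forms a weighted $2$-design for $\mathbb{CP}^{|S|-1}$ if and only if
\[
\sum_k w_k \, (x_k x_k^*)^{\otimes 2} = \frac{1}{\binom{|S|+1}{2}} P_{\mathrm{sym}},
\]
equivalently if and only if the frame potential $\sum_{j,k} w_j w_k |\langle x_j, x_k\rangle|^4$ attains its minimum value $\frac{2}{|S|(|S|+1)}\bigl(\sum_k w_k\bigr)^2$. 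So the strategy is to choose the weights $w_\alpha$ on the character vectors $x_\alpha$ (uniform, by symmetry) and $w_r$ on the standard basis vectors $e_r$ (also uniform), and check the frame-potential identity by a direct computation of all the pairwise inner products.

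The key computation breaks into three types of inner products. For two basis vectors, $\langle e_r, e_{r'}\rangle = \delta_{r,r'}$, which is trivial. For a basis vector and a character vector, $|\langle e_r, x_\alpha\rangle|^2 = 1/|S|$, independent of $r$ and $\alpha$ — this is where the construction gets its ``unbiasedness,'' and it contributes a clean, easily summed term. The interesting case is two character vectors: $\langle x_\alpha, x_\beta\rangle = \frac{1}{|S|}\sum_{s\in S} (\alpha\beta^{-1})(s)$, so $|\langle x_\alpha,x_\beta\rangle|^2 = \frac{1}{|S|^2}\bigl|\sum_{s\in S}\gamma(s)\bigr|^2$ where $\gamma = \alpha\beta^{-1}$ ranges over $\hat G$. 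Here is where the Sidon hypothesis enters: I would compute $\sum_{\gamma\in\hat G} \bigl|\sum_{s\in S}\gamma(s)\bigr|^4$ by expanding the fourth power as a sum over quadruples $(s_1,s_2,s_3,s_4)\in S^4$ and using orthogonality of characters, $\sum_\gamma \gamma(s_1+s_2-s_3-s_4) = |G|\cdot\mathbf{1}[s_1+s_2=s_3+s_4]$. The Sidon property says $s_1+s_2 = s_3+s_4$ forces $\{s_1,s_2\}=\{s_3,s_4\}$, so the number of solutions is exactly $2|S|^2 - |S|$ (the $|S|^2$ ordered pairs, counted twice for the two orderings, minus the $|S|$ diagonal overcounts). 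This pins down the fourth moment exactly, and feeding it back into the frame potential should give equality in the Welch/design bound.

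The main obstacle I anticipate is bookkeeping rather than conceptual difficulty: choosing the weights so that the three contributions (basis--basis, basis--character, character--character) assemble into exactly $\frac{2}{|S|(|S|+1)}(\sum_k w_k)^2$, and handling the cross-term between the $e_r$ block and the $x_\alpha$ block correctly, since that term is what forces a nontrivial relationship between the two weight values. A cleaner alternative, which I would pursue if the frame-potential algebra gets unwieldy, is to work directly on the symmetric subspace: show that $\sum_r w_r (e_r^{\otimes 2})(e_r^{\otimes 2})^* + \sum_\alpha w_\alpha (x_\alpha^{\otimes 2})(x_\alpha^{\otimes 2})^*$ acts as a scalar on each of the two natural $P_{\mathrm{sym}}$-invariant pieces — the span of $\{e_r\otimes e_r\}$ and its orthogonal complement within the symmetric subspace — using the Sidon condition to evaluate the relevant Gram matrices. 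Either route reduces Theorem~\ref{thm.Sidon characterization} to the single counting fact about representations $s_1+s_2=s_3+s_4$ in a Sidon set, which is immediate from the definition.
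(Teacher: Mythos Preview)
Your proposal is correct, and both routes you sketch lead to a valid proof. The paper takes essentially your ``cleaner alternative'': it computes the matrix entries of $X:=\sum_{\alpha}(x_\alpha^{\otimes 2})(x_\alpha^{\otimes 2})^*$ and $E:=\sum_{r}(e_r^{\otimes 2})(e_r^{\otimes 2})^*$ in the basis $\{e_s\otimes e_{s'}\}$, uses character orthogonality to get $X_{(s,s'),(t,t')}=\tfrac{|G|}{|S|^2}\,\mathbf{1}[s+s'=t+t']$, and then invokes the Sidon hypothesis once to identify this with the off-diagonal and diagonal blocks $A$ and $B$ of $P_{\mathrm{sym}}=\tfrac12 A+B$. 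The weights $w_\alpha=\tfrac{|S|^2}{2|G|}$ and $w_r=\tfrac12$ drop out of the linear relations $X=\tfrac{|G|}{|S|^2}(A+B)$ and $E=B$ with no equation to solve.

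Your primary route via the weighted frame potential is a genuinely different packaging of the same character-sum computation. It works---the quadratic in the weight ratio that the bookkeeping produces is a perfect square, forcing the unique ratio $w_\alpha/w_r = |S|^2/|G|$---but the paper's direct entrywise computation is shorter precisely because it bypasses the fourth-moment inequality and the attendant algebra of determining the weights. What the frame-potential route buys is a familiar landmark for readers who know the Welch-bound characterization of $2$-designs; what the paper's route buys is that the weights are read off rather than solved for.
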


\begin{proof}
Consider the transpose map $T$ over $(\mathbb{C}^S)^{\otimes 2}$ defined by taking $T(x\otimes y) = y\otimes x$ and extending linearly.
The orthogonal projection onto the subspace of symmetric tensors is then given by $P=\frac{1}{2}(I+T)$, where $I$ denotes the identity map.
Identifing $P$ with its matrix representation relative to the basis $\{e_s\otimes e_{s'}\}_{s,s'\in S}$, then the matrix entries are given by
\begin{align*}
P_{(s,s'),(t,t')}
&=(e_s\otimes e_{s'})^*\frac{1}{2}(I+T)(e_t\otimes e_{t'})\\
&=\frac{1}{2}(e_s^*\otimes e_{s'}^*)(e_t\otimes e_{t'}+e_{t'}\otimes e_t)
=\frac{1}{2}(\delta_{s,t}\delta_{s',t'}+\delta_{s,t'}\delta_{s',t}).
\end{align*}
This leads us to consider the matrices $A$ and $B$ defined by
\begin{align*}
A_{(s,s'),(t,t')}
&=\left\{
\begin{array}{cl}
1&\text{if $s\neq s'$, $t\neq t'$, $\{s,s'\}=\{t,t'\}$}\\
0&\text{else}
\end{array}
\right\},\\
B_{(s,s'),(t,t')}
&=\left\{
\begin{array}{cl}
1&\text{if $s=s'$, $t=t'$, $\{s,s'\}=\{t,t'\}$}\\
0&\text{else}
\end{array}
\right\},
\end{align*}
since then $P=\frac{1}{2}A+B$.

Given a Sidon set $S\subseteq G$, the Bodmann--Haas construction returns the sequence $\{x_\alpha\}_{\alpha\in \hat{G}}\cup\{e_r\}_{r\in S}$.
Consider the matrices
\[
X:=\sum_{\alpha\in\hat{G}}(x_\alpha^{\otimes 2})(x_\alpha^{\otimes 2})^*,
\qquad
E:=\sum_{r\in S}(e_r^{\otimes 2})(e_r^{\otimes 2})^*.
\]
We will express $P$ as a nonnegative combination of $X$ and $E$, thereby demonstrating that $\{x_\alpha\}_{\alpha\in \hat{G}}\cup\{e_r\}_{r\in S}$ is a weighted projective $2$-design.
We proceed by computing matrix entries:
\begin{align*}
X_{(s,s'),(t,t')}
&=(e_s\otimes e_{s'})^*X(e_t\otimes e_{t'})\\
&=\sum_{\alpha\in\hat{G}}(e_s\otimes e_{s'})^*(x_\alpha^{\otimes 2})(x_\alpha^{\otimes 2})^*(e_t\otimes e_{t'})\\
&=\sum_{\alpha\in\hat{G}}(e_s^*x_\alpha)(e_{s'}^*x_\alpha)(x_\alpha^*e_t)(x_\alpha^*e_{t'})\\
&=\frac{1}{|S|^2}\sum_{\alpha\in\hat{G}}\alpha(s)\alpha(s')\overline{\alpha(t)\alpha(t')}\\
&=\frac{1}{|S|^2}\sum_{\alpha\in\hat{G}}\alpha(s+s'-t-t')
=\left\{\begin{array}{cl}
\frac{|G|}{|S|^2}&\text{if $s+s'=t+t'$}\\
0&\text{else}
\end{array}
\right\}.
\end{align*}
In particular, since $S$ is Sidon by assumption, we have $X=\frac{|G|}{|S|^2}(A+B)$.
Next,
\begin{align*}
E_{(s,s'),(t,t')}
&=(e_s\otimes e_{s'})^*E(e_t\otimes e_{t'})\\
&=\sum_{r\in S}(e_s\otimes e_{s'})^*(e_r^{\otimes 2})(e_r^{\otimes 2})^*(e_t\otimes e_{t'})\\
&=\sum_{r\in S}(e_s^*e_r)(e_{s'}^*e_r)(e_r^*e_t)(e_r^*e_{t'})\\
&=\sum_{r\in S}\delta_{s,r}\delta_{s',r}\delta_{r,t}\delta_{r,t'}
=\left\{\begin{array}{cl}
1&\text{if $s=s'=t=t'$}\\
0&\text{else}
\end{array}\right\},
\end{align*}
and so $E=B$.
Overall, we have
\[
P
=\frac{1}{2}A+B
=\frac{|S|^2}{2|G|}X+\frac{1}{2}E,
\]
and so $\{x_\alpha\}_{\alpha\in \hat{G}}\cup\{e_r\}_{r\in S}$ is a weighted projective $2$-design with \(w_{\alpha} = \frac{|S|^2}{2|G|}\) for \(\alpha\in\hat{G}\) and \(w_{r}=\frac{1}{2}\) for \(r\in S\).
\end{proof}

\section{Nearly tight weighted projective $2$-designs}

Recall that we seek a sharp upper bound on $n(d)$.
Letting $m(d)$ denote the size of the smallest group that contains a Sidon set of size $d$, then the following bound is an immediate consequence of Theorem~\ref{thm.Sidon characterization}:

\begin{corollary}
\label{cor.bound from sidon}
$n(d)\leq m(d)+d$.
\end{corollary}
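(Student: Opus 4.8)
The plan is to derive Corollary~\ref{cor.bound from sidon} directly from Theorem~\ref{thm.Sidon characterization} together with the size bookkeeping built into the Bodmann--Haas construction. Fix $d\in\mathbb{N}$ and let $G$ be a finite abelian group of size $m(d)$ that contains a Sidon set $S\subseteq G$ with $|S|=d$; such a $G$ exists by the definition of $m(d)$. Applying Theorem~\ref{thm.Sidon characterization} to this $S$ yields a weighted $2$-design for $\mathbb{CP}^{|S|-1}=\mathbb{CP}^{d-1}$, namely the output $\{x_\alpha\}_{\alpha\in\hat G}\cup\{e_r\}_{r\in S}$ of the Bodmann--Haas construction applied to $S$.

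Next I would count the vectors in this design. By the definition of the Bodmann--Haas construction, it returns $|G|+|S|$ unit vectors; here that is $m(d)+d$. Since $n(d)$ was defined (via~\cite{IversonKM:21}) as the size of the smallest weighted $2$-design for $\mathbb{CP}^{d-1}$, exhibiting one of size $m(d)+d$ immediately gives $n(d)\le m(d)+d$, which is the claimed bound.

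There is essentially no obstacle here: the corollary is a one-line consequence of the theorem once the definitions are unwound, and indeed the excerpt already flags it as ``an immediate consequence of Theorem~\ref{thm.Sidon characterization}.'' The only point that warrants a word of care is that the vectors produced by the construction should be genuinely distinct (or that repetitions do not inflate the count beyond $m(d)+d$), but distinctness of the $x_\alpha$'s follows from distinctness of characters restricted to $S$ when $S$ generates $G$ — and in any case one may always pass to the subgroup generated by $S$, which only decreases $|G|$, so the bound $n(d)\le m(d)+d$ is unaffected. Thus the proof is simply: invoke Theorem~\ref{thm.Sidon characterization} on a minimal ambient group, count $|G|+|S|=m(d)+d$, and conclude.
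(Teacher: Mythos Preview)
Your argument is correct and is exactly the ``immediate consequence'' the paper has in mind: pick a minimal ambient group $G$ with $|G|=m(d)$ containing a Sidon set of size $d$, apply Theorem~\ref{thm.Sidon characterization}, and count $|G|+|S|=m(d)+d$. The distinctness digression is unnecessary---the Bodmann--Haas construction is defined to return a sequence of $|G|+|S|$ vectors, so the count is $m(d)+d$ regardless, and any coincidences could only improve the bound.
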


As we will soon see, Corollary~\ref{cor.bound from sidon} is the sharpest known upper bound on $n(d)$ for all but finitely many $d\in\mathbb{N}$.
To obtain a tight upper bound on $m(d)$, we seek \textit{dense} Sidon sets, i.e., large Sidon sets relative to their parent groups.
Note that the equivalence
\[
a+b = c+d
\qquad
\Longleftrightarrow
\qquad
a-d = c-b
\]
implies that $S\subseteq G$ is a Sidon set precisely when all pairwise differences between distinct members of $S$ are different.
The pigeonhole principle then gives the necessary condition
\begin{equation}
\label{eq.bound on sidon sets}
|S|\big(|S|-1\big)
\leq |G|-1,
\end{equation}
i.e., $|S|\leq(1+o(1))\sqrt{|G|}$.
Meanwhile, every known infinite family of Sidon sets that satisfies $|S|\geq(1-o(1))\sqrt{|G|}$ stems from the following constructions; see~\cite{EberhardM:23} and references therein:

\begin{proposition}
\label{prop.known dense sidon sets}
In what follows,\footnote{For convenience, we report the parameters $(|G|,|S|)$ of each construction in terms of $q$.} $q$ is a prime power and $S$ is a Sidon set in $G$.
\begin{itemize}
\item[(a)]
\textbf{Erd\H{o}s--Tur\'{a}n.}
$(q^2,q)$.
$G=(\mathbb{F}_q,+)^2$, 
$S=\{(x,x^2):x\in\mathbb{F}_q\}$, 
$\operatorname{char}(q)>2$.
\item[(b)]
\textbf{Singer.}
$(q^2+q+1,q+1)$.
$G=\mathbb{F}_{q^3}^\times/\mathbb{F}_q^{\times}$,
$S=\{[x]:x\in\mathbb{F}_{q^3}^\times,\operatorname{tr}x=0\}$.
\item[(c)]
\textbf{Bose.}
$(q^2-1,q)$.
$G=\mathbb{F}_{q^2}^\times$,
$S=\{x\in \mathbb{F}_{q^2}^\times:\operatorname{tr}x=0\}$.
\item[(d)]
\textbf{Spence.}
$(q(q-1),q-1)$.
$G=\mathbb{F}_q^\times\times(\mathbb{F}_q,+)$,
$S=\{(x,x):x\in\mathbb{F}_q^\times\}$.
\item[(e)]
\textbf{Hughes.}
$((q-1)^2,q-2)$.
$G=(\mathbb{F}_q^\times)^2$,
$S=\{(x,y):x,y\neq0,x+y=1\}$.
\end{itemize}
\end{proposition}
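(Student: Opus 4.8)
The plan is to verify each of the five constructions in Proposition~\ref{prop.known dense sidon sets} directly, checking in each case that $S$ is a subset of the stated group $G$ and that the Sidon property holds, i.e., that the pairwise differences among distinct elements of $S$ are all distinct (equivalently, that $a+b=c+d$ forces $\{a,b\}=\{c,d\}$). Since the excerpt already records the equivalence $a+b=c+d \Longleftrightarrow a-d=c-b$, I would work with the difference formulation throughout, as it tends to linearize the algebra.

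For the \textbf{Erd\H{o}s--Tur\'{a}n} construction, the computation is elementary: if $(x,x^2)-(y,y^2)=(z,z^2)-(w,w^2)$ in $(\mathbb{F}_q,+)^2$ then $x-y=z-w$ and $x^2-y^2=z^2-w^2$; dividing the second equation by the (nonzero) first gives $x+y=z+w$, and combining yields $x=z$, $y=w$ (here $\operatorname{char}(q)>2$ is used to divide by $2$). For \textbf{Bose} and \textbf{Singer} I would use the standard trace-zero description: the key point is that an element $x\in\mathbb{F}_{q^2}$ with $\operatorname{tr}x=\operatorname{tr}(x)_{\mathbb{F}_{q^2}/\mathbb{F}_q}=x+x^q=0$ satisfies a degree-two relation over $\mathbb{F}_q$, so the $q$ such elements form a line through the origin, and multiplicativity of the norm/trace structure turns the Sidon condition into the statement that a quadratic over $\mathbb{F}_q$ has at most two roots; for Singer, the same idea applies after passing to $\mathbb{F}_{q^3}^\times/\mathbb{F}_q^\times$ and noting $|G|=(q^3-1)/(q-1)=q^2+q+1$ while the trace-zero elements of $\mathbb{F}_{q^3}$ number $q^2$ and are stable under scaling by $\mathbb{F}_q^\times$, giving $|S|=q+1$. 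The \textbf{Spence} construction is again direct in $\mathbb{F}_q^\times\times(\mathbb{F}_q,+)$: from $(x,x)-(y,y)=(z,z)-(w,w)$ one gets $x/y=z/w$ in the multiplicative coordinate and $x-y=z-w$ in the additive one, and a short manipulation forces $x=z$, $y=w$. Finally, for \textbf{Hughes} in $(\mathbb{F}_q^\times)^2$, parametrize $S$ by $x\in\mathbb{F}_q\setminus\{0,1\}$ via $(x,1-x)$; the Sidon relation becomes $x_1/x_2=x_3/x_4$ together with $(1-x_1)/(1-x_2)=(1-x_3)/(1-x_4)$, which after clearing denominators is a pair of bilinear equations whose only solutions have $\{x_1,x_2\}=\{x_3,x_4\}$.

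In each case one must also confirm the cardinalities $|G|$ and $|S|$ as claimed; these are routine counts (the trace map $\mathbb{F}_{q^n}\to\mathbb{F}_q$ is surjective $\mathbb{F}_q$-linear, so its kernel has size $q^{n-1}$), but I would state them explicitly since they are what ultimately feed into the bound $m(d)\le |G|$. I expect the main obstacle to be organizational rather than deep: keeping the verification uniform across five constructions that live in superficially different groups. The cleanest route is probably to reduce all five to the single statement that a nondegenerate quadratic (or a bilinear system of rank two) over a field has the expected solution set, and to cite the literature — e.g.~\cite{EberhardM:23} and the references therein — for the historical constructions rather than reproving classical facts about difference sets. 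Since the proposition only asserts that these are Sidon sets with the stated parameters, and all five are well documented, the honest and efficient proof is a short verification of the Sidon property for each, with the cardinality counts noted alongside.
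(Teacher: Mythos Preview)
The paper does not prove Proposition~\ref{prop.known dense sidon sets} at all; it simply records the five constructions and directs the reader to~\cite{EberhardM:23} and its references. Your plan to verify each Sidon property directly therefore goes well beyond the paper's own treatment, and your final paragraph --- citing~\cite{EberhardM:23} for the classical constructions --- is in fact exactly what the paper does.

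Your verifications for (a), (d), and (e) are correct and clean. For (b) and (c) your sketch is pointed in the right direction but is too vague to stand as a proof: ``a quadratic has at most two roots'' is the right slogan, but you have not yet written down which quadratic and over which field, and in the multiplicative setting the passage from $a/b=c/d$ to a polynomial identity requires an explicit substitution. More importantly, your own count in (c) should have raised a flag: the trace-zero elements of $\mathbb{F}_{q^2}$ do form a one-dimensional $\mathbb{F}_q$-line through the origin, but that line contains $0$, so $\{x\in\mathbb{F}_{q^2}^\times:\operatorname{tr}x=0\}$ has only $q-1$ elements, not the $q$ recorded in the proposition. (Indeed, as stated, those $q-1$ elements are $\mathbb{F}_q^\times\cdot\alpha$ for a fixed $\alpha$, and all pairwise ratios lie in $\mathbb{F}_q^\times$, so the set is \emph{not} Sidon for $q>3$.) The intended Bose set is the affine line $\{x:\operatorname{tr}x=c\}$ for a fixed \emph{nonzero} $c$, which has $q$ elements, all nonzero, and for which the Sidon verification does reduce to a quadratic having at most two roots. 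If you carry out the direct proof, you should either silently correct the statement or note the discrepancy; if you follow the paper and simply cite~\cite{EberhardM:23}, the issue is absorbed into the reference.
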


Some of these Sidon sets have already met the Bodmann--Haas construction.
First, Godsil and Roy~\cite{GodsilR:09} used a relative difference set isomorphic to the Erd\"{o}s--Tur\'{a}n Sidon set to construct mutually unbiased bases.
Later, Bodmann and Haas~\cite{BodmannH:16} used the Singer Sidon set as well as a relative difference set isomorphic to the Bose Sidon set to construct projective codes and designs.
We are not aware of the Spence or Hughes Sidon sets appearing previously in the literature on codes and designs.

Since subsets of Sidon sets are also Sidon sets, we may bound $m(d)$ by the size of the smallest group described in Proposition~\ref{prop.known dense sidon sets} whose Sidon set has size at least $d$.
This allows us to prove our main result:

\begin{proof}[Proof of Theorem~\ref{thm.main}]
It suffices to demonstrate $m(d)\leq d^2+O(d^{1.525})$, since then the result follows from Corollary~\ref{cor.bound from sidon}.
Considering Proposition~\ref{prop.known dense sidon sets}(a), it holds that $m(d)$ is at most $p(d)^2$, where $p(d)$ denotes the smallest prime $\geq d$.
Finally, the main result in~\cite{BakerHP:01} gives $p(d)\leq d+O(d^{0.525})$, and so we are done.
\end{proof}

We conclude this section by discussing Table~\ref{table}, which compares our bound based on Corollary~\ref{cor.bound from sidon} and Proposition~\ref{prop.known dense sidon sets} to the previous bounds in Proposition~\ref{prop.known constructions}.
Implementing Proposition~\ref{prop.known constructions}(a) is cumbersome since the known SICs are not maintained in a public database.
We first collected the dimensions listed in~\cite{Flammia:online}, which represents a complete survey of known dimensions as of September 2017.
Then we included the dimensions $23$ (due to~\cite{Kopp:21}), $52$ (due to~\cite{BengtssonGM:24}), and $67$ and $103$ (due to~\cite{ApplebyBGHM:22}).
As far as we know, this gives all dimensions $\leq150$ for which an exact SIC has been published.
We do not include any of the unpublished exact SICs that were announced in~\cite{Kopp:21,Grassl:21a,Grassl:21b}.

In Table~\ref{table}, the $d^2$ column gives the best known lower bound on $n(d)$, while the next two columns give competing upper bounds on $n(d)$.
We highlight the better of the two upper bounds in yellow, and when this matches the lower bound, we also highlight the $d^2$ column.
At times, the Sidon set we use is obtained by removing any $k$ points from the $\operatorname{Hughes}(q)$ Sidon set, which we denote by $\operatorname{Hughes}(q)-k$.

We make a few observations from Table~\ref{table}.
For every $d\leq 150$, exactly one of three things happens: either a SIC exists, or the upper bounds tie, or our upper bound is strictly better by using (a subset of) the Hughes Sidon set.
In this regime, none of the bounds from Proposition~\ref{prop.known constructions} make use of part (e).
Similarly, $m(d)$ is never achieved by the Erd\H{o}s-Tur\'{a}n Sidon set since there is always Bose Sidon set of the same size but in a smaller group.
When the upper bounds tie, it's frequently because our use of the Singer and Bose Sidon sets align with Proposition~\ref{prop.known constructions}(c) and (d), respectively, as these correspond to the original application of the Bodmann--Haas construction.
However, the upper bounds also tie between Proposition~\ref{prop.known constructions}(b) with $k=1$ and our use of Spence Sidon sets.
In hindsight, these ties are made possible because the parameters match, but we don't know of a deeper relationship between these weighted projective $2$-designs.
One of the main takeaways from Table~\ref{table} is that the improvements we provide are due to (subsets of) the Hughes Sidon sets.

\section{Discussion}

In this paper, we used Sidon sets to construct new weighted projective $2$-designs, which in turn represents quantitative progress towards Zauner's conjecture.
In this section, we highlight some fundamental limits of our approach.

Note that the necessary condition \eqref{eq.bound on sidon sets} gives that any group containing a Sidon set of size $d$ must have cardinality at least $d^2-d+1$.
The weighted projective $2$-design resulting from the Bodmann--Haas construction then has size at least $d^2+1$.
As such, our approach is not powerful enough to establish Zauner's conjecture that $n(d)=d^2$.
We suspect that the easiest way to improve the bounds on $n(d)$ in Table~\ref{table} is to find more exact SICs, of which several have been announced in~\cite{Kopp:21,Grassl:21a,Grassl:21b}.

Short of a proof of Zauner's conjecture, it would be interesting to improve Theorem~\ref{thm.main}.
To this end, the Bodmann--Haas construction is somewhat limiting, since we believe $m(d)$ is at least nearly achieved by subsets of the Sidon sets in Proposition~\ref{prop.known dense sidon sets}.
As such, any improvement must come from better bounds on gaps between primes.
Heuristics suggest that the best possible bound is given by Cram\'{e}r's conjecture, and so an estimate of the form $n(d)\leq d^2+o(d\log^2d)$ would likely require a different approach.

\section*{Acknowledgments}

The authors thank Emily King for a helpful discussion of relevant literature.
JJ was supported by NSF DMS 2220320. 
DGM was supported by NSF DMS 2220304.
The views expressed are those of the authors and do not reflect the official guidance or position of the United States Government, the Department of Defense, the United States Air Force,
or the United States Space Force.

\newpage

\begin{small}
\begin{longtable}{|c|ccc|ccc|}
\caption{\label{table}Best known bounds on $n(d)$}
\\\hline
\phantom{a}$d$\phantom{a}	& \phantom{aaaa}$d^2$\phantom{aaaa} & \phantom{a}$\begin{array}{c}\text{Prior}\\\text{bound}\\ \text{on $n(d)$}\end{array}$\phantom{a} & \!\!\!$\begin{array}{c}\text{Best known}\\\text{bound on}\\\text{$m(d)+d$}\end{array}$\!\!\! & $\begin{array}{c}\text{Prop.~\ref{prop.known constructions}}\\\text{part}\end{array}$ & Sidon set	& Group \\\hline \endhead 
\hline\endfoot
$1$	& \cellcolor{yellow}$1$	& \cellcolor{yellow}$1$	& $2$	& (a)	& $\{0\}$	& $\{0\}$ \\
$2$	& \cellcolor{yellow}$4$	& \cellcolor{yellow}$4$	& $5$	& (a)	& $\operatorname{Bose}(2)$	& $\mathbb{Z}_{3}$\\
$3$	& \cellcolor{yellow}$9$	& \cellcolor{yellow}$9$	& $10$	& (a)	& $\operatorname{Singer}(2)$	& $\mathbb{Z}_{7}$\\
$4$	& \cellcolor{yellow}$16$	& \cellcolor{yellow}$16$	& $17$	& (a)	& $\operatorname{Singer}(3)$	& $\mathbb{Z}_{13}$\\
$5$	& \cellcolor{yellow}$25$	& \cellcolor{yellow}$25$	& $26$	& (a)	& $\operatorname{Singer}(4)$	& $\mathbb{Z}_{21}$\\
$6$	& \cellcolor{yellow}$36$	& \cellcolor{yellow}$36$	& $37$	& (a)	& $\operatorname{Singer}(5)$	& $\mathbb{Z}_{31}$\\
$7$	& \cellcolor{yellow}$49$	& \cellcolor{yellow}$49$	& $55$	& (a)	& $\operatorname{Bose}(7)$	& $\mathbb{Z}_{48}$\\
$8$	& \cellcolor{yellow}$64$	& \cellcolor{yellow}$64$	& $65$	& (a)	& $\operatorname{Singer}(7)$	& $\mathbb{Z}_{57}$\\
$9$	& \cellcolor{yellow}$81$	& \cellcolor{yellow}$81$	& $82$	& (a)	& $\operatorname{Singer}(8)$	& $\mathbb{Z}_{73}$\\
$10$	& \cellcolor{yellow}$100$	& \cellcolor{yellow}$100$	& $101$	& (a)	& $\operatorname{Singer}(9)$	& $\mathbb{Z}_{91}$\\
$11$	& \cellcolor{yellow}$121$	& \cellcolor{yellow}$121$	& $131$	& (a)	& $\operatorname{Bose}(11)$	& $\mathbb{Z}_{120}$\\
$12$	& \cellcolor{yellow}$144$	& \cellcolor{yellow}$144$	& $145$	& (a)	& $\operatorname{Singer}(11)$	& $\mathbb{Z}_{133}$\\
$13$	& \cellcolor{yellow}$169$	& \cellcolor{yellow}$169$	& $181$	& (a)	& $\operatorname{Bose}(13)$	& $\mathbb{Z}_{168}$\\
$14$	& \cellcolor{yellow}$196$	& \cellcolor{yellow}$196$	& $197$	& (a)	& $\operatorname{Singer}(13)$	& $\mathbb{Z}_{183}$\\
$15$	& \cellcolor{yellow}$225$	& \cellcolor{yellow}$225$	& $255$	& (a)	& $\operatorname{Spence}(16)$	& $\mathbb{Z}_{15}\times \mathbb{Z}_{2}^{4}$\\
$16$	& \cellcolor{yellow}$256$	& \cellcolor{yellow}$256$	& $271$	& (a)	& $\operatorname{Bose}(16)$	& $\mathbb{Z}_{255}$\\
$17$	& \cellcolor{yellow}$289$	& \cellcolor{yellow}$289$	& $290$	& (a)	& $\operatorname{Singer}(16)$	& $\mathbb{Z}_{273}$\\
$18$	& \cellcolor{yellow}$324$	& \cellcolor{yellow}$324$	& $325$	& (a)	& $\operatorname{Singer}(17)$	& $\mathbb{Z}_{307}$\\
$19$	& \cellcolor{yellow}$361$	& \cellcolor{yellow}$361$	& $379$	& (a)	& $\operatorname{Bose}(19)$	& $\mathbb{Z}_{360}$\\
$20$	& \cellcolor{yellow}$400$	& \cellcolor{yellow}$400$	& $401$	& (a)	& $\operatorname{Singer}(19)$	& $\mathbb{Z}_{381}$\\
$21$	& \cellcolor{yellow}$441$	& \cellcolor{yellow}$441$	& $505$	& (a)	& $\operatorname{Hughes}(23)$	& $\mathbb{Z}_{22}^2$\\
$22$	& $484$	& \cellcolor{yellow}$528$	& \cellcolor{yellow}$528$	& (b)	& $\operatorname{Spence}(23)$	& $\mathbb{Z}_{506}$\\
$23$	& \cellcolor{yellow}$529$	& \cellcolor{yellow}$529$	& $551$	& (a)	& $\operatorname{Bose}(23)$	& $\mathbb{Z}_{528}$\\
$24$	& \cellcolor{yellow}$576$	& \cellcolor{yellow}$576$	& $577$	& (a)	& $\operatorname{Singer}(23)$	& $\mathbb{Z}_{553}$\\
$25$	& $625$	& \cellcolor{yellow}$649$	& \cellcolor{yellow}$649$	& (d)	& $\operatorname{Bose}(25)$	& $\mathbb{Z}_{624}$\\
$26$	& $676$	& \cellcolor{yellow}$677$	& \cellcolor{yellow}$677$	& (c)	& $\operatorname{Singer}(25)$	& $\mathbb{Z}_{651}$\\
$27$	& $729$	& \cellcolor{yellow}$755$	& \cellcolor{yellow}$755$	& (d)	& $\operatorname{Bose}(27)$	& $\mathbb{Z}_{728}$\\
$28$	& \cellcolor{yellow}$784$	& \cellcolor{yellow}$784$	& $785$	& (a)	& $\operatorname{Singer}(27)$	& $\mathbb{Z}_{757}$\\
$29$	& $841$	& \cellcolor{yellow}$869$	& \cellcolor{yellow}$869$	& (d)	& $\operatorname{Bose}(29)$	& $\mathbb{Z}_{840}$\\
$30$	& \cellcolor{yellow}$900$	& \cellcolor{yellow}$900$	& $901$	& (a)	& $\operatorname{Singer}(29)$	& $\mathbb{Z}_{871}$\\
$31$	& \cellcolor{yellow}$961$	& \cellcolor{yellow}$961$	& $991$	& (a)	& $\operatorname{Bose}(31)$	& $\mathbb{Z}_{960}$\\
$32$	& $1024$	& \cellcolor{yellow}$1025$	& \cellcolor{yellow}$1025$	& (c)	& $\operatorname{Singer}(31)$	& $\mathbb{Z}_{993}$\\
$33$	& $1089$	& \cellcolor{yellow}$1090$	& \cellcolor{yellow}$1090$	& (c)	& $\operatorname{Singer}(32)$	& $\mathbb{Z}_{1057}$\\
$34$	& $1156$	& $3536$	& \cellcolor{yellow}$1330$	& (b)	& $\operatorname{Hughes}(37)-1$	& $\mathbb{Z}_{36}^2$\\
$35$	& \cellcolor{yellow}$1225$	& \cellcolor{yellow}$1225$	& $1331$	& (a)	& $\operatorname{Hughes}(37)$	& $\mathbb{Z}_{36}^2$\\
$36$	& $1296$	& \cellcolor{yellow}$1368$	& \cellcolor{yellow}$1368$	& (b)	& $\operatorname{Spence}(37)$	& $\mathbb{Z}_{1332}$\\
$37$	& \cellcolor{yellow}$1369$	& \cellcolor{yellow}$1369$	& $1405$	& (a)	& $\operatorname{Bose}(37)$	& $\mathbb{Z}_{1368}$\\
$38$	& $1444$	& \cellcolor{yellow}$1445$	& \cellcolor{yellow}$1445$	& (c)	& $\operatorname{Singer}(37)$	& $\mathbb{Z}_{1407}$\\
$39$	& \cellcolor{yellow}$1521$	& \cellcolor{yellow}$1521$	& $1639$	& (a)	& $\operatorname{Hughes}(41)$	& $\mathbb{Z}_{40}^2$\\
$40$	& $1600$	& \cellcolor{yellow}$1680$	& \cellcolor{yellow}$1680$	& (b)	& $\operatorname{Spence}(41)$	& $\mathbb{Z}_{1640}$\\
$41$	& $1681$	& \cellcolor{yellow}$1721$	& \cellcolor{yellow}$1721$	& (d)	& $\operatorname{Bose}(41)$	& $\mathbb{Z}_{1680}$\\
$42$	& $1764$	& \cellcolor{yellow}$1765$	& \cellcolor{yellow}$1765$	& (c)	& $\operatorname{Singer}(41)$	& $\mathbb{Z}_{1723}$\\
$43$	& \cellcolor{yellow}$1849$	& \cellcolor{yellow}$1849$	& $1891$	& (a)	& $\operatorname{Bose}(43)$	& $\mathbb{Z}_{1848}$\\
$44$	& $1936$	& \cellcolor{yellow}$1937$	& \cellcolor{yellow}$1937$	& (c)	& $\operatorname{Singer}(43)$	& $\mathbb{Z}_{1893}$\\
$45$	& $2025$	& $8190$	& \cellcolor{yellow}$2161$	& (b)	& $\operatorname{Hughes}(47)$	& $\mathbb{Z}_{46}^2$\\
$46$	& $2116$	& \cellcolor{yellow}$2208$	& \cellcolor{yellow}$2208$	& (b)	& $\operatorname{Spence}(47)$	& $\mathbb{Z}_{2162}$\\
$47$	& $2209$	& \cellcolor{yellow}$2255$	& \cellcolor{yellow}$2255$	& (d)	& $\operatorname{Bose}(47)$	& $\mathbb{Z}_{2208}$\\
$48$	& \cellcolor{yellow}$2304$	& \cellcolor{yellow}$2304$	& $2305$	& (a)	& $\operatorname{Singer}(47)$	& $\mathbb{Z}_{2257}$\\
$49$	& $2401$	& \cellcolor{yellow}$2449$	& \cellcolor{yellow}$2449$	& (d)	& $\operatorname{Bose}(49)$	& $\mathbb{Z}_{2400}$\\
$50$	& $2500$	& \cellcolor{yellow}$2501$	& \cellcolor{yellow}$2501$	& (c)	& $\operatorname{Singer}(49)$	& $\mathbb{Z}_{2451}$\\
$51$	& $2601$	& $5304$	& \cellcolor{yellow}$2755$	& (b)	& $\operatorname{Hughes}(53)$	& $\mathbb{Z}_{52}^2$\\
$52$	& \cellcolor{yellow}$2704$	& \cellcolor{yellow}$2704$	& $2808$	& (a)	& $\operatorname{Spence}(53)$	& $\mathbb{Z}_{2756}$\\
$53$	& $2809$	& \cellcolor{yellow}$2861$	& \cellcolor{yellow}$2861$	& (d)	& $\operatorname{Bose}(53)$	& $\mathbb{Z}_{2808}$\\
$54$	& $2916$	& \cellcolor{yellow}$2917$	& \cellcolor{yellow}$2917$	& (c)	& $\operatorname{Singer}(53)$	& $\mathbb{Z}_{2863}$\\
$55$	& $3025$	& $18260$	& \cellcolor{yellow}$3419$	& (b)	& $\operatorname{Hughes}(59)-2$	& $\mathbb{Z}_{58}^2$\\
$56$	& $3136$	& $6384$	& \cellcolor{yellow}$3420$	& (b)	& $\operatorname{Hughes}(59)-1$	& $\mathbb{Z}_{58}^2$\\
$57$	& $3249$	& $13110$	& \cellcolor{yellow}$3421$	& (b)	& $\operatorname{Hughes}(59)$	& $\mathbb{Z}_{58}^2$\\
$58$	& $3364$	& \cellcolor{yellow}$3480$	& \cellcolor{yellow}$3480$	& (b)	& $\operatorname{Spence}(59)$	& $\mathbb{Z}_{3422}$\\
$59$	& $3481$	& \cellcolor{yellow}$3539$	& \cellcolor{yellow}$3539$	& (d)	& $\operatorname{Bose}(59)$	& $\mathbb{Z}_{3480}$\\
$60$	& $3600$	& \cellcolor{yellow}$3601$	& \cellcolor{yellow}$3601$	& (c)	& $\operatorname{Singer}(59)$	& $\mathbb{Z}_{3541}$\\
$61$	& $3721$	& \cellcolor{yellow}$3781$	& \cellcolor{yellow}$3781$	& (d)	& $\operatorname{Bose}(61)$	& $\mathbb{Z}_{3720}$\\
$62$	& $3844$	& \cellcolor{yellow}$3845$	& \cellcolor{yellow}$3845$	& (c)	& $\operatorname{Singer}(61)$	& $\mathbb{Z}_{3783}$\\
$63$	& $3969$	& \cellcolor{yellow}$4095$	& \cellcolor{yellow}$4095$	& (b)	& $\operatorname{Spence}(64)$	& $\mathbb{Z}_{63}\times \mathbb{Z}_{2}^{6}$\\
$64$	& $4096$	& \cellcolor{yellow}$4159$	& \cellcolor{yellow}$4159$	& (d)	& $\operatorname{Bose}(64)$	& $\mathbb{Z}_{4095}$\\
$65$	& $4225$	& \cellcolor{yellow}$4226$	& \cellcolor{yellow}$4226$	& (c)	& $\operatorname{Singer}(64)$	& $\mathbb{Z}_{4161}$\\
$66$	& $4356$	& \cellcolor{yellow}$4488$	& \cellcolor{yellow}$4488$	& (b)	& $\operatorname{Spence}(67)$	& $\mathbb{Z}_{4422}$\\
$67$	& \cellcolor{yellow}$4489$	& \cellcolor{yellow}$4489$	& $4555$	& (a)	& $\operatorname{Bose}(67)$	& $\mathbb{Z}_{4488}$\\
$68$	& $4624$	& \cellcolor{yellow}$4625$	& \cellcolor{yellow}$4625$	& (c)	& $\operatorname{Singer}(67)$	& $\mathbb{Z}_{4557}$\\
$69$	& $4761$	& $9660$	& \cellcolor{yellow}$4969$	& (b)	& $\operatorname{Hughes}(71)$	& $\mathbb{Z}_{70}^2$\\
$70$	& $4900$	& \cellcolor{yellow}$5040$	& \cellcolor{yellow}$5040$	& (b)	& $\operatorname{Spence}(71)$	& $\mathbb{Z}_{4970}$\\
$71$	& $5041$	& \cellcolor{yellow}$5111$	& \cellcolor{yellow}$5111$	& (d)	& $\operatorname{Bose}(71)$	& $\mathbb{Z}_{5040}$\\
$72$	& $5184$	& \cellcolor{yellow}$5185$	& \cellcolor{yellow}$5185$	& (c)	& $\operatorname{Singer}(71)$	& $\mathbb{Z}_{5113}$\\
$73$	& $5329$	& \cellcolor{yellow}$5401$	& \cellcolor{yellow}$5401$	& (d)	& $\operatorname{Bose}(73)$	& $\mathbb{Z}_{5328}$\\
$74$	& $5476$	& \cellcolor{yellow}$5477$	& \cellcolor{yellow}$5477$	& (c)	& $\operatorname{Singer}(73)$	& $\mathbb{Z}_{5403}$\\
$75$	& $5625$	& $11400$	& \cellcolor{yellow}$6159$	& (b)	& $\operatorname{Hughes}(79)-2$	& $\mathbb{Z}_{78}^2$\\
$76$	& $5776$	& $17480$	& \cellcolor{yellow}$6160$	& (b)	& $\operatorname{Hughes}(79)-1$	& $\mathbb{Z}_{78}^2$\\
$77$	& $5929$	& $35728$	& \cellcolor{yellow}$6161$	& (b)	& $\operatorname{Hughes}(79)$	& $\mathbb{Z}_{78}^2$\\
$78$	& $6084$	& \cellcolor{yellow}$6240$	& \cellcolor{yellow}$6240$	& (b)	& $\operatorname{Spence}(79)$	& $\mathbb{Z}_{6162}$\\
$79$	& $6241$	& \cellcolor{yellow}$6319$	& \cellcolor{yellow}$6319$	& (d)	& $\operatorname{Bose}(79)$	& $\mathbb{Z}_{6240}$\\
$80$	& $6400$	& \cellcolor{yellow}$6401$	& \cellcolor{yellow}$6401$	& (c)	& $\operatorname{Singer}(79)$	& $\mathbb{Z}_{6321}$\\
$81$	& $6561$	& \cellcolor{yellow}$6641$	& \cellcolor{yellow}$6641$	& (d)	& $\operatorname{Bose}(81)$	& $\mathbb{Z}_{6560}$\\
$82$	& $6724$	& \cellcolor{yellow}$6725$	& \cellcolor{yellow}$6725$	& (c)	& $\operatorname{Singer}(81)$	& $\mathbb{Z}_{6643}$\\
$83$	& $6889$	& \cellcolor{yellow}$6971$	& \cellcolor{yellow}$6971$	& (d)	& $\operatorname{Bose}(83)$	& $\mathbb{Z}_{6888}$\\
$84$	& $7056$	& \cellcolor{yellow}$7057$	& \cellcolor{yellow}$7057$	& (c)	& $\operatorname{Singer}(83)$	& $\mathbb{Z}_{6973}$\\
$85$	& $7225$	& $21845$	& \cellcolor{yellow}$7829$	& (b)	& $\operatorname{Hughes}(89)-2$	& $\mathbb{Z}_{88}^2$\\
$86$	& $7396$	& $14964$	& \cellcolor{yellow}$7830$	& (b)	& $\operatorname{Hughes}(89)-1$	& $\mathbb{Z}_{88}^2$\\
$87$	& $7569$	& $30450$	& \cellcolor{yellow}$7831$	& (b)	& $\operatorname{Hughes}(89)$	& $\mathbb{Z}_{88}^2$\\
$88$	& $7744$	& \cellcolor{yellow}$7920$	& \cellcolor{yellow}$7920$	& (b)	& $\operatorname{Spence}(89)$	& $\mathbb{Z}_{7832}$\\
$89$	& $7921$	& \cellcolor{yellow}$8009$	& \cellcolor{yellow}$8009$	& (d)	& $\operatorname{Bose}(89)$	& $\mathbb{Z}_{7920}$\\
$90$	& $8100$	& \cellcolor{yellow}$8101$	& \cellcolor{yellow}$8101$	& (c)	& $\operatorname{Singer}(89)$	& $\mathbb{Z}_{8011}$\\
$91$	& $8281$	& $49868$	& \cellcolor{yellow}$9307$	& (b)	& $\operatorname{Hughes}(97)-4$	& $\mathbb{Z}_{96}^2$\\
$92$	& $8464$	& $25576$	& \cellcolor{yellow}$9308$	& (b)	& $\operatorname{Hughes}(97)-3$	& $\mathbb{Z}_{96}^2$\\
$93$	& $8649$	& $34782$	& \cellcolor{yellow}$9309$	& (b)	& $\operatorname{Hughes}(97)-2$	& $\mathbb{Z}_{96}^2$\\
$94$	& $8836$	& $26696$	& \cellcolor{yellow}$9310$	& (b)	& $\operatorname{Hughes}(97)-1$	& $\mathbb{Z}_{96}^2$\\
$95$	& $9025$	& $18240$	& \cellcolor{yellow}$9311$	& (b)	& $\operatorname{Hughes}(97)$	& $\mathbb{Z}_{96}^2$\\
$96$	& $9216$	& \cellcolor{yellow}$9408$	& \cellcolor{yellow}$9408$	& (b)	& $\operatorname{Spence}(97)$	& $\mathbb{Z}_{9312}$\\
$97$	& $9409$	& \cellcolor{yellow}$9505$	& \cellcolor{yellow}$9505$	& (d)	& $\operatorname{Bose}(97)$	& $\mathbb{Z}_{9408}$\\
$98$	& $9604$	& \cellcolor{yellow}$9605$	& \cellcolor{yellow}$9605$	& (c)	& $\operatorname{Singer}(97)$	& $\mathbb{Z}_{9507}$\\
$99$	& $9801$	& $19800$	& \cellcolor{yellow}$10099$	& (b)	& $\operatorname{Hughes}(101)$	& $\mathbb{Z}_{100}^2$\\
$100$	& $10000$	& \cellcolor{yellow}$10200$	& \cellcolor{yellow}$10200$	& (b)	& $\operatorname{Spence}(101)$	& $\mathbb{Z}_{10100}$\\
$101$	& $10201$	& \cellcolor{yellow}$10301$	& \cellcolor{yellow}$10301$	& (d)	& $\operatorname{Bose}(101)$	& $\mathbb{Z}_{10200}$\\
$102$	& $10404$	& \cellcolor{yellow}$10405$	& \cellcolor{yellow}$10405$	& (c)	& $\operatorname{Singer}(101)$	& $\mathbb{Z}_{10303}$\\
$103$	& \cellcolor{yellow}$10609$	& \cellcolor{yellow}$10609$	& $10711$	& (a)	& $\operatorname{Bose}(103)$	& $\mathbb{Z}_{10608}$\\
$104$	& $10816$	& \cellcolor{yellow}$10817$	& \cellcolor{yellow}$10817$	& (c)	& $\operatorname{Singer}(103)$	& $\mathbb{Z}_{10713}$\\
$105$	& $11025$	& $22260$	& \cellcolor{yellow}$11341$	& (b)	& $\operatorname{Hughes}(107)$	& $\mathbb{Z}_{106}^2$\\
$106$	& $11236$	& \cellcolor{yellow}$11448$	& \cellcolor{yellow}$11448$	& (b)	& $\operatorname{Spence}(107)$	& $\mathbb{Z}_{11342}$\\
$107$	& $11449$	& \cellcolor{yellow}$11555$	& \cellcolor{yellow}$11555$	& (d)	& $\operatorname{Bose}(107)$	& $\mathbb{Z}_{11448}$\\
$108$	& $11664$	& \cellcolor{yellow}$11665$	& \cellcolor{yellow}$11665$	& (c)	& $\operatorname{Singer}(107)$	& $\mathbb{Z}_{11557}$\\
$109$	& $11881$	& \cellcolor{yellow}$11989$	& \cellcolor{yellow}$11989$	& (d)	& $\operatorname{Bose}(109)$	& $\mathbb{Z}_{11880}$\\
$110$	& $12100$	& \cellcolor{yellow}$12101$	& \cellcolor{yellow}$12101$	& (c)	& $\operatorname{Singer}(109)$	& $\mathbb{Z}_{11991}$\\
$111$	& $12321$	& $24864$	& \cellcolor{yellow}$12655$	& (b)	& $\operatorname{Hughes}(113)$	& $\mathbb{Z}_{112}^2$\\
$112$	& $12544$	& \cellcolor{yellow}$12768$	& \cellcolor{yellow}$12768$	& (b)	& $\operatorname{Spence}(113)$	& $\mathbb{Z}_{12656}$\\
$113$	& $12769$	& \cellcolor{yellow}$12881$	& \cellcolor{yellow}$12881$	& (d)	& $\operatorname{Bose}(113)$	& $\mathbb{Z}_{12768}$\\
$114$	& $12996$	& \cellcolor{yellow}$12997$	& \cellcolor{yellow}$12997$	& (c)	& $\operatorname{Singer}(113)$	& $\mathbb{Z}_{12883}$\\
$115$	& $13225$	& $53130$	& \cellcolor{yellow}$14515$	& (b)	& $\operatorname{Hughes}(121)-4$	& $\mathbb{Z}_{120}^2$\\
$116$	& $13456$	& $27144$	& \cellcolor{yellow}$14516$	& (b)	& $\operatorname{Hughes}(121)-3$	& $\mathbb{Z}_{120}^2$\\
$117$	& $13689$	& $109746$	& \cellcolor{yellow}$14517$	& (b)	& $\operatorname{Hughes}(121)-2$	& $\mathbb{Z}_{120}^2$\\
$118$	& $13924$	& $83780$	& \cellcolor{yellow}$14518$	& (b)	& $\operatorname{Hughes}(121)-1$	& $\mathbb{Z}_{120}^2$\\
$119$	& $14161$	& $28560$	& \cellcolor{yellow}$14519$	& (b)	& $\operatorname{Hughes}(121)$	& $\mathbb{Z}_{120}^2$\\
$120$	& $14400$	& \cellcolor{yellow}$14640$	& \cellcolor{yellow}$14640$	& (b)	& $\operatorname{Spence}(121)$	& $\mathbb{Z}_{120}\times \mathbb{Z}_{11}^{2}$\\
$121$	& $14641$	& \cellcolor{yellow}$14761$	& \cellcolor{yellow}$14761$	& (d)	& $\operatorname{Bose}(121)$	& $\mathbb{Z}_{14640}$\\
$122$	& $14884$	& \cellcolor{yellow}$14885$	& \cellcolor{yellow}$14885$	& (c)	& $\operatorname{Singer}(121)$	& $\mathbb{Z}_{14763}$\\
$123$	& $15129$	& $91020$	& \cellcolor{yellow}$15499$	& (b)	& $\operatorname{Hughes}(125)$	& $\mathbb{Z}_{124}^2$\\
$124$	& \cellcolor{yellow}$15376$	& \cellcolor{yellow}$15376$	& $15624$	& (a)	& $\operatorname{Spence}(125)$	& $\mathbb{Z}_{124}\times \mathbb{Z}_{5}^{3}$\\
$125$	& $15625$	& \cellcolor{yellow}$15749$	& \cellcolor{yellow}$15749$	& (d)	& $\operatorname{Bose}(125)$	& $\mathbb{Z}_{15624}$\\
$126$	& $15876$	& \cellcolor{yellow}$15877$	& \cellcolor{yellow}$15877$	& (c)	& $\operatorname{Singer}(125)$	& $\mathbb{Z}_{15751}$\\
$127$	& $16129$	& \cellcolor{yellow}$16255$	& \cellcolor{yellow}$16255$	& (d)	& $\operatorname{Bose}(127)$	& $\mathbb{Z}_{16128}$\\
$128$	& $16384$	& \cellcolor{yellow}$16385$	& \cellcolor{yellow}$16385$	& (c)	& $\operatorname{Singer}(127)$	& $\mathbb{Z}_{16257}$\\
$129$	& $16641$	& \cellcolor{yellow}$16642$	& \cellcolor{yellow}$16642$	& (c)	& $\operatorname{Singer}(128)$	& $\mathbb{Z}_{16513}$\\
$130$	& $16900$	& \cellcolor{yellow}$17160$	& \cellcolor{yellow}$17160$	& (b)	& $\operatorname{Spence}(131)$	& $\mathbb{Z}_{17030}$\\
$131$	& $17161$	& \cellcolor{yellow}$17291$	& \cellcolor{yellow}$17291$	& (d)	& $\operatorname{Bose}(131)$	& $\mathbb{Z}_{17160}$\\
$132$	& $17424$	& \cellcolor{yellow}$17425$	& \cellcolor{yellow}$17425$	& (c)	& $\operatorname{Singer}(131)$	& $\mathbb{Z}_{17293}$\\
$133$	& $17689$	& $177156$	& \cellcolor{yellow}$18629$	& (b)	& $\operatorname{Hughes}(137)-2$	& $\mathbb{Z}_{136}^2$\\
$134$	& $17956$	& $36180$	& \cellcolor{yellow}$18630$	& (b)	& $\operatorname{Hughes}(137)-1$	& $\mathbb{Z}_{136}^2$\\
$135$	& $18225$	& $36720$	& \cellcolor{yellow}$18631$	& (b)	& $\operatorname{Hughes}(137)$	& $\mathbb{Z}_{136}^2$\\
$136$	& $18496$	& \cellcolor{yellow}$18768$	& \cellcolor{yellow}$18768$	& (b)	& $\operatorname{Spence}(137)$	& $\mathbb{Z}_{18632}$\\
$137$	& $18769$	& \cellcolor{yellow}$18905$	& \cellcolor{yellow}$18905$	& (d)	& $\operatorname{Bose}(137)$	& $\mathbb{Z}_{18768}$\\
$138$	& $19044$	& \cellcolor{yellow}$19045$	& \cellcolor{yellow}$19045$	& (c)	& $\operatorname{Singer}(137)$	& $\mathbb{Z}_{18907}$\\
$139$	& $19321$	& \cellcolor{yellow}$19459$	& \cellcolor{yellow}$19459$	& (d)	& $\operatorname{Bose}(139)$	& $\mathbb{Z}_{19320}$\\
$140$	& $19600$	& \cellcolor{yellow}$19601$	& \cellcolor{yellow}$19601$	& (c)	& $\operatorname{Singer}(139)$	& $\mathbb{Z}_{19461}$\\
$141$	& $19881$	& $40044$	& \cellcolor{yellow}$22045$	& (b)	& $\operatorname{Hughes}(149)-6$	& $\mathbb{Z}_{148}^2$\\
$142$	& $20164$	& $80940$	& \cellcolor{yellow}$22046$	& (b)	& $\operatorname{Hughes}(149)-5$	& $\mathbb{Z}_{148}^2$\\
$143$	& $20449$	& $122980$	& \cellcolor{yellow}$22047$	& (b)	& $\operatorname{Hughes}(149)-4$	& $\mathbb{Z}_{148}^2$\\
$144$	& $20736$	& $41760$	& \cellcolor{yellow}$22048$	& (b)	& $\operatorname{Hughes}(149)-3$	& $\mathbb{Z}_{148}^2$\\
$145$	& $21025$	& $210540$	& \cellcolor{yellow}$22049$	& (b)	& $\operatorname{Hughes}(149)-2$	& $\mathbb{Z}_{148}^2$\\
$146$	& $21316$	& $42924$	& \cellcolor{yellow}$22050$	& (b)	& $\operatorname{Hughes}(149)-1$	& $\mathbb{Z}_{148}^2$\\
$147$	& $21609$	& $129948$	& \cellcolor{yellow}$22051$	& (b)	& $\operatorname{Hughes}(149)$	& $\mathbb{Z}_{148}^2$\\
$148$	& $21904$	& \cellcolor{yellow}$22200$	& \cellcolor{yellow}$22200$	& (b)	& $\operatorname{Spence}(149)$	& $\mathbb{Z}_{22052}$\\
$149$	& $22201$	& \cellcolor{yellow}$22349$	& \cellcolor{yellow}$22349$	& (d)	& $\operatorname{Bose}(149)$	& $\mathbb{Z}_{22200}$\\
$150$	& $22500$	& \cellcolor{yellow}$22501$	& \cellcolor{yellow}$22501$	& (c)	& $\operatorname{Singer}(149)$	& $\mathbb{Z}_{22351}$\\\hline	
\end{longtable}
\end{small}

\end{document}